\newtheorem{theorem}{Theorem}
\newtheorem{definition}[theorem]{Definition}
\newtheorem*{remark}{Remark}
\Crefname{conjecture}{Conjecture}{Conjectures}
\theoremstyle{plain}
\theoremstyle{plain}
\author{A. P. Akande and Robert Schneider}
\address{Department of Mathematics\newline
University of Georgia\newline
Athens, Georgia 30602, U.S.A.}
\email{agbolade.akande@uga.edu}
\address{Department of Mathematics\newline
University of Georgia\newline
Athens, Georgia 30602, U.S.A.}
\email{robert.schneider@uga.edu}
\title{Semi-modular forms from Fibonacci--Eisenstein series }
\begin{document}

\begin{abstract}
In recent work, M. Just and the second author defined a class of ``semi-modular forms'' on $\mathbb C$, in analogy with classical modular forms, that are ``half modular'' in a particular sense; and constructed families of such functions as Eisenstein-like series using symmetries related to integer partitions. Looking for further natural examples of semi-modular behavior, here we construct a family of Eisenstein-like series to produce semi-modular forms, using symmetries related to Fibonacci numbers instead of partitions. We then consider other Lucas sequences that yield semi-modular forms. 
\end{abstract}
\maketitle

\section{Introduction and statement of results}
  
In a recent paper \cite{JS}, M. Just and the second author defined a class of ``semi-modular forms'' on $\mathbb C$, in analogy with classical modular forms, that are ``half modular'' in a particular sense; and produced examples of such functions using special constructions related to integer partitions that give a combinatorial interpretation to the transformation $z \mapsto -1/z$.\footnote{Inversion $z\mapsto -1/z$ is related to conjugation of Ferrers-Young diagrams of partitions in \cite{JS}.} Quite apart from the theory of partitions, here we construct new examples of semi-modular forms using symmetries from Fibonacci numbers and Lucas sequences. 

We give a quick overview of modular forms theory in order to describe semi-modularity.  Let us recall the canonical generators of the {\it general linear group} $GL_{2}(\mathbb Z)$ are 
\begin{equation*}
T=\begin{pmatrix}
1 & 1 \\
0 & 1 
\end{pmatrix},
\  \  \  \  
U=\begin{pmatrix}
0 & 1 \\
1 & 0 
\end{pmatrix},
\  \  \  \  
V=\begin{pmatrix}
1 & 0 \\
0 & -1 
\end{pmatrix},
\end{equation*}
such that $GL_{2}(\mathbb Z)=\left<T,U,V\right>$. An important subgroup of $GL_{2}(\mathbb Z)$ is the {\it modular group} $PSL_{2}(\mathbb Z)$, which is well known to be generated by $T$ together with the matrix 
\begin{equation*}
S=\begin{pmatrix}
0 & -1 \\
1 & 0 
\end{pmatrix}
\in GL_{2}(\mathbb Z).\end{equation*}
Functions of $z$ in the upper half-plane $\mathbb H$ invariant under $\left<S,T\right>=PSL_{2}(\mathbb Z)$ up to a simple multiplier in $z$ are {\it modular forms}, a class of functions central to number theory \cite{Apostol, Ono_web}.

For a canonical example, recall the weight-$m$ {\it Eisenstein series}, the prototype of an integer weight holomorphic modular form, convergent for $m>2, z\in \mathbb H$:
\begin{equation}\label{Eisenstein}
G_{m}(z)=\sum_{\substack{j,k\in\mathbb Z \\ (j,k)\neq (0,0)}}  {(jz+k)^{-m}}.
\end{equation}
Note that if $m$ is odd, every term cancels its negative, and $G_m(z)=0$. Taking $m \mapsto 2m$, for $m>1$ the function $G_{2m}:\mathbb H \to \mathbb C$ satisfies the defining properties of a modular form: 
\begin{enumerate}[(i)]
\item $G_{2m}(-\frac{1}{z})=z^{2m}G_{2m}(z)$\   (weighted invariance under inversion matrix $S$),
\item $G_{2m}(z+1)=G_{2m}(z)$ \  (invariance under translation matrix $T$).
\end{enumerate}

    The idea of a {\it semi-modular} form is essentially that it is a function of a complex variable invariant under {\it one} of the two matrices $S,T$, together with being invariant under a second matrix $M$ that is complementary in a natural sense to $S,T$ -- thus, as we mentioned above, it is ``half modular''. 
    To give more detail about the sense in which matrix $M$ complements $PSL_2(\mathbb Z)$, we want $M\in GL_2(\mathbb Z)$ to be a ``nice'' transformation matrix  such that one can partition the general linear group, in terms of its generators, into the modular group $PSL_2(\mathbb Z)$ together with $M$; i.e., such that one can write $$GL_2(\mathbb Z)=\left<M,S,T\right>.$$ In principle, the matrices $M,S,T$ yield three families of functions invariant on $\left<S,T\right>$, $\left<M,S\right>$ and $\left<M,T\right>$, respectively.  We count modular forms as semi-modular as well, but the existence of a complementary matrix $M$ is implicit in the definition. 
    
    We note that finding a matrix $M$ to complement $S,T$ is all but trivial; the details are in identifying matrices $M$ that are canonical in some sense, and that also admit reasonably natural functions invariant on $\left<M,S\right>$ and $\left<M,T\right>$. In \cite{JS}, the ``even function'' matrix 
\begin{equation}\label{even}
M'=\begin{pmatrix}
-1 & 0 \\
0 & 1 
\end{pmatrix}
\end{equation}
was  used to produce such families: the family $\left<M',T\right>$ includes functions like $\cos z, z\in \mathbb C$. Using detailed constructions in partition theory, 
the authors define ``partition Eisenstein series'' invariant on $\left<M',S\right>$, that are analogous to \eqref{Eisenstein} but summed over integer partitions. 

Now, partitions and modular forms are closely connected classically, so it seems natural to look to partitions for generalizations of modularity.\footnote{See e.g. papers on the $q$-bracket of Bloch and Okounkov such as \cite{BO, Ono_bracket, Schneider_JTP, Jan_bracket, Zagier_bracket}.} In \cite{JS}, which uses symmetries in the Ferrers-Young diagrams of partitions in a very specific way to induce semi-modular behavior, the authors ask if other classes of semi-modular forms 
arise from natural symmetries in areas of mathematics beyond the confluence of partitions and modular forms. 

We will now show that semi-modularity does arise outside of the usual partitions-modular forms universe; we construct a family of semi-modular forms using symmetries related to classical Fibonacci numbers instead of partitions. 
Recall for $n\geq 0$, the $n$th {\it Fibonacci number} $F_n$ is defined by $F_0:=0,\  F_1:=1$, and for $n>1$,
\begin{equation}\label{Fibdef}
F_n\  :=\  F_{n-1}+F_{n-2}.
\end{equation}
This recursion extends to negative indices by defining, for $n\geq 1$,
$
F_{-n}\  :=\  (-1)^{n-1} F_n.
$
 
Much like partitions, Fibonacci numbers are additive objects that appear throughout the mathematical sciences, particularly due to their close connection to the {\it golden ratio} $\phi:= \frac{1+\sqrt{5}}{2}=\lim_{n\to\infty}F_{n}/F_{n-1}$. Via evaluations of the Rogers-Ramanujan continued fraction \cite{Sills}, the golden ratio plays a role in the theory of modular forms -- giving a second-hand link to modular forms for Fibonacci numbers. More recent works directly connect Fibonacci numbers and modular forms (see e.g. \cite{Murty, Fibmat}). 

Seeking new examples of semi-modular forms, 
we will define a Fibonacci variant of \eqref{Eisenstein}. 

\begin{definition}\label{F} For fixed $m>1$, $z\in \mathbb C, z \neq \phi, -\frac{1}{\phi}$ or  $\frac{F_n}{F_{n-1}}\  \text{for any}\  n\in \mathbb Z$, 
let 
\begin{flalign*} 
{\mathscr F}_{m}(z):=\sum_{j=-\infty}^{\infty} {(F_j z + F_{j-1})^{-m}},\end{flalign*}
where the $F_n$ are Fibonacci numbers, $n \in \mathbb Z$. We call $m>1$ the ``weight'' of the series.
\end{definition}

We note this is a single bilateral summation, as opposed to a double sum as in \eqref{Eisenstein}, and is convergent on its domain when $m>1$ by comparison with $\zeta(m)$. To justify the domain conditions, note that for any $n\in \mathbb Z, n\neq 1$, the term $F_{-n+1}z+F_{-n}$ vanishes in the denominator when $z=F_{n}/F_{n-1}$ due to the minus sign attached to exactly one of the negatively-indexed terms, giving a pole of order $m$. Since $z=\phi, -\frac{1}{\phi}$ are the respective limits of the infinite sequence of poles $F_{n}/F_{n-1}$ and $F_{-n}/F_{-n-1}$ as $n\to \infty$, these limiting values themselves represent essential singularities.
 
 This ``Fibonacci-Eisenstein'' series ${\mathscr F}_{k}$ has nice transformation properties at even weights.

\begin{theorem}\label{thm2}
Let $z\in \mathbb C, z \neq \phi, -\frac{1}{\phi}, \frac{F_{n}}{F_{n-1}}\  \text{for any}\  n\in \mathbb Z$. 
For $k\geq 1$ we have:
\begin{enumerate}[(i)]
\item $\mathscr F_{2k}(-\frac{1}{z})=z^{2k}\mathscr F_{2k}(z)$; 
\item $\mathscr F_{2k}(1-z)=\mathscr F_{2k}(z)$. 
\end{enumerate}
\end{theorem}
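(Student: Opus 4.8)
The plan is to prove both identities by directly manipulating the bilateral sum defining $\mathscr{F}_{2k}$, using only two elementary facts about Fibonacci numbers: the reflection formula $F_{-n}=(-1)^{n-1}F_n$ that extends \eqref{Fibdef} to negative indices, and the recursion itself in the form $F_{j+1}=F_j+F_{j-1}$. Since $m=2k>1$ the series converges absolutely (by the comparison noted after Definition~\ref{F}), so I may reindex by any bijection of $\mathbb{Z}$ without changing the value. The feature that makes everything go through is that the exponent $-2k$ is even: each sign $(-1)^{\bullet}$ thrown off by the reflection formula is raised to an even power and therefore disappears. This is exactly why the theorem is stated for even weight — at odd weight the same steps leave behind a net sign and the identities fail.

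For part (i), I would pull the inversion inside the sum and clear denominators, writing $F_j(-1/z)+F_{j-1}=(F_{j-1}z-F_j)/z$ to obtain $\mathscr{F}_{2k}(-1/z)=z^{2k}\sum_{j}(F_{j-1}z-F_j)^{-2k}$. It then suffices to identify the remaining sum with $\mathscr{F}_{2k}(z)$, which I would accomplish via the substitution $j\mapsto 1-j$: setting $j=1-i$ and applying the reflection formula to $F_{-i}$ and $F_{1-i}$ gives $F_{j-1}z-F_j=(-1)^{i-1}(F_i z+F_{i-1})$, and raising to the $-2k$ power kills the sign, so the sum collapses to $\sum_i (F_i z+F_{i-1})^{-2k}=\mathscr{F}_{2k}(z)$.

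For part (ii), I would first use the recursion to rewrite the translated term as $F_j(1-z)+F_{j-1}=(F_j+F_{j-1})-F_j z=F_{j+1}-F_j z$, so that (the even exponent absorbing the overall minus sign) $\mathscr{F}_{2k}(1-z)=\sum_j (F_j z-F_{j+1})^{-2k}$. Here the substitution $j\mapsto -j$ finishes the argument: with $j=-i$ the reflection formula yields $F_j z-F_{j+1}=(-1)^{i-1}(F_i z+F_{i-1})$, and again the even power removes the sign to return $\mathscr{F}_{2k}(z)$. I expect no genuine obstacle beyond careful bookkeeping of the parity exponents produced by $F_{-n}=(-1)^{n-1}F_n$ under the two reindexings, together with the observation that absolute convergence legitimizes the rearrangements in the first place.
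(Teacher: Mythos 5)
Your proof is correct. The core ingredients are the same ones the paper relies on --- the recursion \eqref{Fibdef}, the reflection formula $F_{-n}=(-1)^{n-1}F_n$, and the fact that the even exponent $-2k$ absorbs the resulting signs --- but your organization is genuinely different. The paper first splits the bilateral series as $\mathscr{F}_{2k}=\mathscr{F}_{2k}^-+\mathscr{F}_{2k}^+$ and proves two intermediate functional equations, $\mathscr{F}_{2k}(z+1)=z^{-2k}\mathscr{F}_{2k}(1/z)$ and $\mathscr{F}_{2k}(-z)=z^{-2k}\mathscr{F}_{2k}(1/z)$, the first by tracking a $\mp 1$ correction term from an index shift in each half, the second by observing that negation swaps the two unilateral halves; statements (i) and (ii) then follow by substituting $z\mapsto -z$ and comparing. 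You instead attack (i) and (ii) directly, each via a single bijective reindexing of $\mathbb{Z}$ ($j\mapsto 1-j$ and $j\mapsto -j$ respectively), with no decomposition and no correction terms; I checked the sign bookkeeping, e.g.\ $F_{j-1}z-F_j=(-1)^{i-1}(F_iz+F_{i-1})$ under $j=1-i$, and it is right, including at the boundary indices. Your route is shorter and makes explicit that absolute convergence is what licenses the rearrangements; the paper's route makes visible the ``swap'' of the positively and non-positively indexed halves, which is the structural feature the authors later isolate as the ``alternating sign symmetry'' needed to extend the result to Lucas sequences in Section \ref{sect3}. Either argument is complete.
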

 
\begin{remark} We note that in these functions, the odd-weight cases do not vanish like \eqref{Eisenstein}.
\end{remark}
The property (i) is, of course, weighted invariance with respect to matrix $S$. Property (ii), symmetry around $\operatorname{Re}(z)=1/2$, is encoded in the matrix
\begin{equation*}
P=\begin{pmatrix}
-1 & 1 \\
0 & 1 
\end{pmatrix}.
\end{equation*}
The Riemann zeta function $\zeta(z)$ in the critical strip is an example of a function with (weighted) invariance around $P$, as are the trigonometric functions $
\sin(\pi z),\ \cos(2 \pi z)
$. 

Now, noting that, in fact, the generators of $GL_2(\mathbb Z)$ can be written \begin{equation}\label{gen}
U=PTS,\  \  \  \  V=SPTS^{3}, 
\end{equation}
then one can alternatively view the general linear group as
\begin{equation*}
GL_2(\mathbb Z)=\left< P,S,T\right>.\end{equation*}
From this perspective, modular forms invariant on $\left<S,T\right>$, and periodic functions of period 1 symmetric around the $1/2$-line invariant on $\left<P,T\right>$, are members of a larger class of {semi-modular forms} invariant on two of the three generators $P,S,T$ of $GL_2(\mathbb Z)$. By Theorem \ref{thm2},  $\mathscr F_{2k}(z)$ is a semi-modular form in this class, too, invariant on $\left<P,S\right>$.

\begin{remark}
We note $PS$ is a so-named Fibonacci matrix (see e.g.  \cite{Fibmat}) such that, for $n\geq1$,
\begin{equation*}
(PS)^n=\begin{pmatrix}
1 & 1 \\
1 & 0 
\end{pmatrix}^n
=\begin{pmatrix}
F_{n+1} & F_n \\
F_n & F_{n-1} 
\end{pmatrix}.
\end{equation*}
\end{remark}

In Section \ref{sect2}, we will establish the semi-modularity of $\mathscr F_{2k}$. 
In Section \ref{sect3} we discuss connections to other Lucas sequences, and produce infinite families of semi-modular forms.

\section{Proof of the main result}\label{sect2}
Our proof of Theorem \ref{thm2} depends on the following decomposition of the bilateral series $\mathscr{F}_{m}(z)$ into two unilateral parts:
\begin{equation}\label{decomp}
\mathscr{F}_{m}(z)\  =\  \mathscr{F}_{m}^-(z)\  +\  \mathscr{F}_{m}^+(z),\end{equation}
where 
\begin{equation*}\mathscr{F}_{m}^-(z) := \sum \limits_{-\infty<n\leq 0} {(F_{n}z + F_{n-1})^{-m}},\  \  \  \  \  \  \mathscr{F}_{m}^+(z) := \sum \limits_{1\leq n<\infty}  {(F_{n}z + F_{n-1})^{-m}}.\end{equation*}
\begin{proof}[Proof of Theorem \ref{thm2}] Recall $F_0=0$ and for negative indices we define $F_{-n}=(-1)^{n-1}F_n$. We only use even values $m=2k$ with $k>1$ (even powers regularize the $\pm$ sign behavior in our proof below). We will employ standard Eisenstein series techniques, together with the recursion \eqref{Fibdef}. Note for the positively indexed terms of $\mathscr{F}_{2k}$ we have 
\begin{flalign}\label{eq2}\mathscr{F}_{2k}^+ \left(z+1\right) &= \sum \limits_{n\geq 1} {\left(F_n\left(z+1\right) + F_{n-1}\right)^{-2k}} =  \sum \limits_{n\geq 1}   {\left(F_{n}z + (F_{n} + F_{n-1})\right)^{-2k}}\\ \nonumber
&= \sum \limits_{n\geq 1}  {(F_{n}z + F_{n+1})^{-2k} } = z^{-2k}\sum \limits_{n\geq 2}  {\left(F_{n}(1/z) + F_{n-1}\right)^{-2k} } \\ \nonumber &= z^{-2k}\mathscr{F}_k^+\left(1/z\right)-1.\end{flalign}
Similarly, for the non-positively indexed terms of $\mathscr{F}_{2k}$ we have
\begin{flalign}\label{eq1}\mathscr{F}_{2k}^- \left(z+1\right) &= \sum \limits_{n\leq 0}  {\left(F_n\left(z+1\right) + F_{n-1}\right)^{-2k}} =  \sum \limits_{n\leq 0}  {(F_{n}z + (F_{n} + F_{n-1}))^{-2k}}\\ \nonumber
&= \sum \limits_{n\leq 0} {\left(F_{n}z + F_{n+1}\right)^{-2k} } = z^{-2k}\sum \limits_{n\leq 1}  {\left(F_{n}(1/z) + F_{n-1}\right)^{-2k} }\\ \nonumber  &= z^{-2k}\mathscr{F}_{2k}^-\left(1/z\right)+1.\end{flalign}
Then using \eqref{decomp} to add the corresponding left- and right-hand sides of \eqref{eq2} and \eqref{eq1} yields \begin{equation}\label{eq2.5}\mathscr{F}_{2k}  \left(z+1\right) = z^{-2k}\mathscr{F}_{2k}\left(1/z\right).\end{equation}
Along similar lines, we also have 
\begin{flalign}\label{eq3}\mathscr{F}_{2k}^- \left(-z\right) &= \sum \limits_{n\leq 0}  {\left(F_n\left(-z \right) + F_{n-1}\right)^{-2k}} = z^{-2k}\sum \limits_{n\leq 0} {\left(-F_n + F_{n-1}(1/z)\right)^{-2k}}\\ \nonumber
&= z^{-2k}\sum \limits_{n\geq 1} {\left(F_{n}(1/z)+F_{n-1}\right)^{-2k}} =z^{-2k}\mathscr{F}_{2k}^+\left(1/z\right),\end{flalign}
as well as
\begin{flalign}\label{eq4}\mathscr{F}_{2k}^+ \left(-z\right) &= \sum \limits_{n\geq 1}{\left(F_n\left(-z \right) + F_{n-1}\right)^{-2k}} = z^{-2k}\sum \limits_{n\geq 1} {\left(-F_n + F_{n-1}(1/z)\right)^{-2k}}\\ \nonumber
&= z^{-2k}\sum \limits_{n\leq 0}{\left(F_{n}(1/z)+F_{n-1}\right)^{-2k}} =z^{-2k}\mathscr{F}_{2k}^-\left(1/z\right).\end{flalign}
We note here that the terms of $\mathscr{F}_{2k}^-$ and $\mathscr{F}_{2k}^+$ swap places to produce the inversion $-z\mapsto 1/z$. As above, adding the corresponding left- and right-hand sides of \eqref{eq3} and \eqref{eq4} yields
\begin{flalign}\label{eq4.5}\mathscr{F}_{2k}  \left(-z\right) = z^{-2k}\mathscr{F}_{2k}\left(1/z\right).\end{flalign}
Making the substitution $z\mapsto -z$, then comparing equations \eqref{eq2.5} and \eqref{eq4.5}, a little algebra gives the theorem. The domain restrictions on $z$ were justified below Definition \ref{F}.\end{proof}

\section{Extension to other Lucas sequences}\label{sect3}

At this stage, seeing how the proofs above follow easily from the recursion \eqref{Fibdef}, a natural next question is: are other Fibonacci-like sequences such as Lucas sequences subject to the same treatment, leading to further families of semi-modular functions? Recall for $a,b \in \mathbb Z$ that a classical {\it Lucas sequence}\footnote{For a comprehensive survey of Fibonacci and Lucas sequences, see \cite{Fib}.} $\{L_n(a,b)\}$ is defined for $n\geq 2$ by the recursion
\begin{equation}\label{Lucasdef}
L_n(a,b)\  :=\  a L_{n-1}(a,b)-b L_{n-2}(a,b).
\end{equation}
With the initial conditions $L_0(a,b)=0,\  L_1(a,b)=1,$ it is called a {\it Lucas sequence of the first kind.} When $L_0(a,b)=2,\  L_1(a,b)=a,$ it is called a {\it Lucas sequence of the second kind}. Lucas sequences generalize a number of classical Fibonacci-like sequences; for instance, the Lucas sequence $L_n(1,-1)$ of the first kind is the Fibonacci sequence $F_n$, and the same form $L_n(1,-1)$ of the second kind defines the classical Lucas numbers $L_n$.

However, for our proof above to extend to other sequences, 
we require more than just the recursion, an even weight, and standard Eisenstein series manipulations. We need the initial terms defined such that $L_{n}(a,b)$ has ``alternating sign symmetry''  $L_{-n}(a,b)=(-1)^{n-1} L_n(a,b)$ around the term $L_0(a,b)$ (or a similar near symmetry), so the positively and negatively indexed terms will make a swap as noted below equations \eqref{eq3} and \eqref{eq4}. 

The Lucas numbers $L_n=L_n(1,-1)$ with initial conditions $L_0:=2, L_1:=1,$ and recursion $L_n:=L_{n-1}+L_{n-2}$, admit the similar extension $L_{-n}=(-1)^nL_n$. 

\begin{theorem}\label{thm3}
Let $m>1,   z\in \mathbb C, z \neq \phi, -\frac{1}{\phi}, \frac{L_{n}}{L_{n-1}}\  \text{for any}\  n\in \mathbb Z$, and for $m>1$ define
\begin{flalign*} 
{\mathscr L}_{m}(z):=\sum_{j=-\infty}^{\infty}{(L_j z + L_{j-1})^{-m}},\end{flalign*} 
where the $L_n$ are Lucas numbers, $n\in \mathbb Z$. Then for $k\geq 1$ we have:
\begin{enumerate}[(i)]
\item $\mathscr L_{2k}(-\frac{1}{z})=z^{2k}\mathscr L_{2k}(z)$; 
\item $\mathscr L_{2k}(1-z)=\mathscr L_{2k}(z)$. 
\end{enumerate}
\end{theorem}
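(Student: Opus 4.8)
The plan is to repeat the proof of Theorem \ref{thm2} almost verbatim, since every ingredient it uses survives the passage from $F_n$ to $L_n$: the Lucas recursion \eqref{Lucasdef} with $(a,b)=(1,-1)$ is the additive recursion $L_n = L_{n-1}+L_{n-2}$, the weight is again even, and the negative-index extension supplies an alternating sign symmetry. Following \eqref{decomp}, I would write $\mathscr{L}_{2k} = \mathscr{L}_{2k}^- + \mathscr{L}_{2k}^+$, splitting at $n \le 0$ and $n \ge 1$, and prove the two intermediate identities $\mathscr{L}_{2k}(z+1) = z^{-2k}\mathscr{L}_{2k}(1/z)$ (the analog of \eqref{eq2.5}) and $\mathscr{L}_{2k}(-z) = z^{-2k}\mathscr{L}_{2k}(1/z)$ (the analog of \eqref{eq4.5}). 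Granting these, substituting $z \mapsto 1/z$ in the second yields property (i), while equating the two identities and then sending $z \mapsto -z$ yields property (ii); the excluded values of $z$ are handled exactly as below Definition \ref{F}.

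For the translation identity I would mimic \eqref{eq2} and \eqref{eq1}: replacing $z$ by $z+1$ and using $L_n + L_{n-1} = L_{n+1}$ turns each summand into $(L_n z + L_{n+1})^{-2k}$, after which factoring out $z^{-2k}$ and reindexing $n \mapsto n+1$ brings the sum back to the shape of $\mathscr{L}_{2k}^{\pm}(1/z)$. For the inversion identity I would expand $\mathscr{L}_{2k}^{\mp}(-z)$, factor out $z^{-2k}$, and apply the reflection $n \mapsto 1-n$ together with $L_{-n} = (-1)^n L_n$; this makes the non-positive and positive ranges swap exactly as noted below \eqref{eq3} and \eqref{eq4}. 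Here the parity of the Lucas symmetry differs from the Fibonacci one ($(-1)^n$ versus $(-1)^{n-1}$), but after the reflection each term acquires only an overall factor of the form $(\pm 1)^{-2k} = 1$; the even weight absorbs it, which is precisely the role of ``even powers regularize the $\pm$ sign behavior'' in the proof of Theorem \ref{thm2} and why odd weights are excluded.

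The one genuinely new point --- and the step I expect to require the most care --- is the boundary bookkeeping in the translation identity, caused by $L_0 = 2 \ne 0$ (whereas $F_0 = 0$). In the Fibonacci case the stray $n=1$ term equals $(F_1(1/z)+F_0)^{-2k} = z^{2k}$, producing the clean corrections $\mp 1$ in \eqref{eq2}--\eqref{eq1}. For Lucas numbers the corresponding term is $(L_1(1/z)+L_0)^{-2k} = (1/z+2)^{-2k}$, so after extracting $z^{-2k}$ the correction becomes $\mp(1+2z)^{-2k}$ rather than $\mp 1$. The decisive observation is that this correction occurs with a plus sign in $\mathscr{L}_{2k}^-(z+1)$ and a minus sign in $\mathscr{L}_{2k}^+(z+1)$, so the two cancel when the unilateral pieces are added via \eqref{decomp}, leaving $\mathscr{L}_{2k}(z+1) = z^{-2k}\mathscr{L}_{2k}(1/z)$ intact. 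Thus the nonzero initial value $L_0 = 2$ costs nothing in the end, and the remainder of the argument is identical to Section \ref{sect2}.
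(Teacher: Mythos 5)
Your proof is correct and follows essentially the same route as the paper, whose entire argument for this theorem is the instruction to replace $F_n$ by $L_n$ in the proof of Theorem \ref{thm2}. Your careful treatment of the boundary term --- the stray correction $\mp(1+2z)^{-2k}$ coming from $L_0=2\neq 0$ in place of the Fibonacci corrections $\mp 1$, which cancels when the two unilateral pieces are added via \eqref{decomp} --- is exactly right, and in fact supplies a detail that the paper's one-line proof leaves implicit.
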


\begin{proof}
Replace $F_n$ with $L_n$ in the proof of Theorem \ref{thm2}, noting the arguments about the poles and singularities at $\phi, -1/\phi$ also hold here since $\lim_{n\to \infty}L_n/L_{n-1}=\phi$.
\end{proof}

Because  the function $\mathscr L_{2k}(z)$ enjoys (weighted) invariance with respect to $\left<P,S\right>$ just like $\mathscr F_{2k}(z)$, it is a semi-modular form in the same class.

The ``alternating sign symmetry'' required for these proofs is not a general property of Lucas sequences, but can be obtained in more general cases. 

\begin{definition}\label{L} For fixed $a,b\neq 0, m>1, z\in \mathbb C, z\neq \frac{L_{n}(a,b)}{L_{n-1}(a,b)}$ for any $n\in \mathbb Z, z \neq \lim_{n\to \infty}\frac{L_{n}(a,b)}{L_{n-1}(a,b)}$ or $\lim_{n\to \infty}\frac{-L_{n-1}(a,b)}{L_{n}(a,b)}$ if the limits exist,  
let 
\begin{flalign*} 
{\mathscr L}_{a,b,m}(z):=\sum_{j=-\infty}^{\infty}{\left(L_j(a,b)z + L_{j-1}(a,b)\right)^{-m}},\end{flalign*}
where $L_n(a,b)$ is a Lucas sequence of either the first or the second kind (to be specified).
\end{definition}

The analytic conditions generalize those in Definition \ref{F} with identical justifications, noting by \eqref{Lucasdef} that ${\mathscr L}_{0,b,m}(z)$ clearly diverges by consideration of geometric series.

Taking $b=-1$ in Lucas sequences $L_n(a,b)$ of the {\it first} kind, one sets $L_0(a,-1):=0,\  L_1(a,-1):=1,$ and for $a \in \mathbb Z, a\neq 0$, one has
$
L_n(a,-1)\  :=\  a L_{n-1}(a,-1)+L_{n-2}(a,-1).
$ 
These $L_n(a,-1)$ are referred to as {\it $a$-Fibonacci numbers}, and generalize the properties of $F_n$. 
%
It is easy to check that we can extend the indices using $L_{-n}(a,-1):=(-1)^{n-1} L_n(a,-1)$ for this Lucas sequence of the first kind. Moreover, a similar relation $L_{-n}(a,-1):=(-1)^{n} L_n(a,-1)$ also holds when $a\neq 0$ if it is taken as a Lucas sequence of the {\it second} kind, setting $L_0(a,-1):=2,\  L_1(a,-1):=a$. 

%

%

\begin{theorem}\label{thm4}
For $a\in \mathbb Z\backslash \{0\}$, let $L_n(a,-1)$ be a Lucas sequence of either the first or the second kind.  Take $z\in \mathbb C, z\neq \frac{L_{n}(a,b)}{L_{n-1}(a,b)}$ for any $n\in \mathbb Z, z \neq \lim_{n\to \infty}\frac{L_{n}(a,-1)}{L_{n-1}(a,-1)}$ or $\lim_{n\to \infty}\frac{-L_{n-1}(a,-1)}{L_{n}(a,-1)}$. 
Then for $k\geq 1$ we have:
\begin{enumerate}[(i)]
\item $\mathscr L_{a,-1,2k}(-\frac{1}{z})=z^{2k}\mathscr L_{a,-1,2k}(z)$; 
\item $\mathscr L_{a,-1,2k}(a-z)=\mathscr L_{a,-1,2k}(z)$. 
\end{enumerate}
\end{theorem}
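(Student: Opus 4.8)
The plan is to mirror the proof of Theorem \ref{thm2} almost verbatim, tracking the two changes that the parameter $a$ introduces: the translation step now moves by $a$ rather than by $1$, and a boundary term appears whose cancellation must be checked. I would start from the same bilateral splitting, writing $\mathscr{L}_{a,-1,2k}(z) = \mathscr{L}^-(z) + \mathscr{L}^+(z)$ with the two pieces summed over $n \leq 0$ and $n \geq 1$ as in \eqref{decomp}.

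For the translation, the crucial point is that with $b=-1$ the recursion \eqref{Lucasdef} becomes $L_{n+1}(a,-1) = a L_n(a,-1) + L_{n-1}(a,-1)$, so replacing $z$ by $z+a$ sends $L_n z + L_{n-1}$ to $L_n z + (a L_n + L_{n-1}) = L_n z + L_{n+1}$, shifting the index by one exactly as the $z \mapsto z+1$ manipulation did for $F_n$. Factoring out $z^{-2k}$ to pass to the inverted argument and reindexing then yields $\mathscr{L}^+(z+a) = z^{-2k}\mathscr{L}^+(1/z) + c$ and $\mathscr{L}^-(z+a) = z^{-2k}\mathscr{L}^-(1/z) - c$, where $c$ is the leftover $n=1$ summand. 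For a first-kind sequence ($L_0=0,\ L_1=1$) this is the constant $c=-1$, exactly as in \eqref{eq2}--\eqref{eq1}; for a second-kind sequence ($L_0=2,\ L_1=a$) it is instead $c = -(a z^{-1}+2)^{-2k}$. Either way $c$ appears with opposite signs in the two pieces, so adding via \eqref{decomp} cancels it and gives $\mathscr{L}_{a,-1,2k}(z+a) = z^{-2k}\mathscr{L}_{a,-1,2k}(1/z)$.

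For the negation step I would compute $\mathscr{L}^\pm(-z)$ as in \eqref{eq3}--\eqref{eq4}: factor out $z^{-2k}$, then apply the alternating-sign symmetry under the reindexing $n \mapsto 1-n$, which carries $\{n \leq 0\}$ onto $\{m \geq 1\}$. Here the relation $L_{-n}=(-1)^{n-1}L_n$ (first kind) or $L_{-n}=(-1)^{n}L_n$ (second kind) converts each summand of $\mathscr{L}^-(-z)$ into $\pm(L_m(1/z)+L_{m-1})$, the overall sign being immaterial because it is raised to the even power $-2k$. This produces the swap $\mathscr{L}^-(-z)=z^{-2k}\mathscr{L}^+(1/z)$ and $\mathscr{L}^+(-z)=z^{-2k}\mathscr{L}^-(1/z)$, whose sum is $\mathscr{L}_{a,-1,2k}(-z)=z^{-2k}\mathscr{L}_{a,-1,2k}(1/z)$.

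Comparing the two identities gives $\mathscr{L}_{a,-1,2k}(z+a)=\mathscr{L}_{a,-1,2k}(-z)$; substituting $z \mapsto -z$ then yields (ii), while feeding the negation identity back into itself and using $(-z)^{-2k}=z^{-2k}$ yields (i). I expect the one genuine wrinkle to be the second-kind boundary term: one must verify that the nonconstant factor $(a z^{-1}+2)^{-2k}$ really does enter $\mathscr{L}^+$ and $\mathscr{L}^-$ with opposite signs and hence drops out, and confirm that both parities of the sign symmetry are absorbed by the even weight. The domain restrictions are justified exactly as below Definition \ref{F}, using $\lim_{n\to\infty} L_n(a,-1)/L_{n-1}(a,-1)$ in place of $\phi$.
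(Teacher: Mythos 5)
Your proposal is correct and follows essentially the same route as the paper: the paper's own proof explicitly notes that ``one could decompose $L_n(a,-1)$ into positively and non-positively indexed terms and carry out the same manipulations'' as in Theorem \ref{thm2}, and then merely summarizes those steps without splitting the sum, whereas you write out precisely that split version, including the correct observation that the second-kind boundary term $(L_1(a,-1)/z+L_0(a,-1))^{-2k}$ enters the two halves with opposite signs and cancels. The only nitpick is bookkeeping: that leftover summand should carry the factor $z^{-2k}$ (i.e.\ it is $z^{-2k}(az^{-1}+2)^{-2k}$ rather than $(az^{-1}+2)^{-2k}$), which does not affect the cancellation or the conclusion.
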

 \begin{remark}
 Setting $a=1$, then Theorem \ref{thm4} subsumes both Theorem \ref{thm2} and Theorem \ref{thm3}.
 \end{remark}

\begin{proof} This is very similar to the proof of Theorem \ref{thm2}; one could decompose $L_n(a,-1)$  into positively  and non-positively indexed terms and carry out the same manipulations. Here we summarize those steps without decomposing into two halves. First of all, we have 
\begin{flalign}\label{eqL1}  
{\mathscr L}_{a,-1,2k}(z+a)&=\sum_{j=-\infty}^{\infty} {\left[L_j(a,-1)(z+a) + L_{j-1}(a,-1)\right]^{-2k}}\\ 
\nonumber &= \sum_{j=-\infty}^{\infty} {\left[L_j(a,-1)z+ (aL_j(a,-1)+L_{j-1}(a,-1))\right]^{-2k}}\\
\nonumber &=z^{-2k}\sum_{j=\infty}^{-\infty} {\left[-L_j(a,-1)+ L_{j+1}(a,-1)(1/z)\right]^{-2k}}=z^{-2k}{\mathscr L}_{a,-1,2k}(1/z),
\end{flalign}
where we reverse indices $\sum_{n=-\infty}^{\infty} \mapsto \sum_{n=\infty}^{-\infty}$ in the final summation to indicate the positively and non-positively indexed terms make a swap just as we noted in equations \eqref{eq3} and \eqref{eq4}. Similarly, we have
\begin{flalign}\label{eqL2}  
{\mathscr L}_{a,-1,2k}(-z)&=\sum_{j=-\infty}^{\infty} {\left[L_j(a,-1)(-z) + L_{j-1}(a,-1)\right]^{-2k}}\\ 
\nonumber &=z^{-2k}\sum_{j=\infty}^{-\infty}{\left[-L_j(a,-1)+ L_{j-1}(a,-1)(1/z)\right]^{-2k}}=z^{-2k}{\mathscr L}_{a,-1,2k}(1/z).
\end{flalign}
Taking $z\mapsto -z$ and comparing \eqref{eqL1} and \eqref{eqL2}, with a little algebra, completes the proof. 
\end{proof}
Then ${\mathscr L}_{a,-1,2k}(z)$ is invariant with respect to $\left<P_a, S\right>$, where $P_a$ is the matrix
\begin{equation*}
P_a=\begin{pmatrix}
-1 & a \\
0 & 1 
\end{pmatrix}
\end{equation*}
representing mirror symmetry around $\operatorname{Re}(z)=a/2$.\footnote{The matrix $P_a$ plays a role in work in the literature relating Fibonacci matrices to modular forms and Poincar\'{e} series; and $P_3S$ is noted in \cite{Fibmat} to be ``almost a Fibonacci matrix''. } 
Noting that $P_a T^a=PT$, then by \eqref{gen} we also have  
\begin{equation*}
U=P_aT^aS,\  \  \  \  V=SP_aT^aS^{3}. 
\end{equation*}
Thus one can write $$GL_2(\mathbb Z)=\left< P_a,S,T\right>,$$ and ${\mathscr L}_{a,-1,2k}(z)$ satisfies our definition of a semi-modular form.


It follows from the recursion \eqref{Lucasdef} that, in general, one can define
\begin{equation}\label{alt} L_{-n}(a,b) := (-1)^{\ell} L_{n}(a,b) / b^n, \end{equation}
where $\ell=1$ if $L_{n}(a,b)$ is a Lucas sequence of the first kind, and $\ell=2$ if it is a Lucas sequence of the second kind. From \eqref{alt} it is clear $b=-1$ is the only substitution that produces the desired ``alternating sign symmetry'' for sequences of either kind. 
Perhaps other substitutions, like setting $b$ equal to other roots of unity, would yield further relations for ${\mathscr L}_{a,b,2k}(z)$.\footnote{One can produce equivalent forms as slight variants on these series -- e.g., 
$\sum_{j=-\infty}^{\infty} {(F_j-F_{j-1}z)^{-2k}}$ 
is semi-modular on $\left<P,S\right>$ just like $\mathscr F_{2k}(z)$ -- so other nice variants of  ${\mathscr L}_{a,b,2k}(z)$ seem possible.}

Seeing how the $a=0$ case of $P_a$ is the complementary matrix \eqref{even} used in \cite{JS}, then the set $\left\{P_a   :  \left< P_a,S,T\right>=GL_2(\mathbb Z), a\in \mathbb Z\right\}$ of mirror symmetries around $\operatorname{Re}(z)=a/2$ is the complete collection of complementary matrices that have so far produced examples of semi-modular forms. Do there exist matrices that are {\it not} of the form $P_a$, that both complement $S,T$ as generators of $GL_2(\mathbb Z)$ and produce natural semi-modular forms? Based on connections such as those proved in \cite{Fibmat} between $P_a$, Fibonacci matrices and Poinc\'{a}re series, are there further links between semi-modularity and classical modular forms? Moreover, since we now find semi-modularity to arise from symmetries in Lucas sequences, as well as from partition theory as in \cite{JS}, one wonders: what other symmetric, additive or recursive structures in mathematics might be used to construct semi-modular Eisenstein-like series? 

\section*{Acknowledgments}
We are thankful to Marie Jameson, Matthew Just, Maxwell Schneider and A. V. Sills for conversations that informed this study.


\begin{thebibliography}{10}\footnotesize


\bibitem{Apostol} T. Apostol, {\it Modular Functions and Dirichlet Series in Number Theory Series}, Graduate Texts in Mathematics {\bf 41} (1990).

\bibitem{BO} S. Bloch and A. Okounkov, \emph{The character of the infinite wedge representation}, Adv. Math. {\bf 149} (2000), no. 1, 1-60.


\bibitem{Ono_bracket} K. Bringmann, K. Ono, and Ian Wagner, \emph{Eichler integrals of Eisenstein series as $q $-brackets of weighted $t$-hook functions on partitions}, arXiv preprint arXiv:2009.07236 (2020).





\bibitem{JS} M. Just and R. Schneider, Partition Eisenstein series and semi-modular forms, {\it Research in Number Theory} (To appear). 


\bibitem{Fib} T. Koshy, {\it Fibonacci and Lucas Numbers with Applications}, John Wiley \& Sons, 2019.


\bibitem{Murty} M. R. Murty, {Fibonacci zeta functions}, {\it Automorphic Representations and L-Functions}, TIFR Conference Proceedings, edited by D. Prasad, CS Rajan, A. Sankaranarayanan, J. Sengupta, Hindustan Book Agency, New Delhi, India, 2013.

\bibitem{Ono_web} K. Ono, \emph{The Web of Modularity: Arithmetic of the Coefficients of Modular Forms and $ q $-series: Arithmetic of the Coefficients of Modular Forms and $q$-series}, No. 102, Amer. Math. Soc., 2004.

\bibitem{Fibmat} P. C. Pasles, {\it Fibonacci matrices and modular forms}, Fibonacci Quarterly {\bf 48:4} (2010).

%
%
\bibitem{Schneider_JTP} R. Schneider, {\it Jacobi’s triple product, mock theta functions, unimodal sequences and the $q$-bracket,} International Journal of Number Theory {\bf 14.07} (2018): 1961-1981.

\bibitem{Sills} A. V. Sills, {\it An Invitation to the Rogers-Ramanujan Identities}, CRC Press, 2017.


\bibitem{Jan_bracket} J. W. M. van Ittersum, {\it A symmetric Bloch-Okounkov theorem,} arXiv preprint arXiv:2006.03401 (2020).

\bibitem{Zagier_bracket} D. Zagier, \emph{Partitions, quasimodular forms, and the Bloch-Okounkov theorem}, The Ramanujan
Journal (2015): 1-24.


\end{thebibliography}
\end{document}